\newtheorem{Th}{Theorem}[section]
\newtheorem{Lemma}[Th]{Lemma}
\newtheorem{Def}[Th]{Definition}
\newcommand{\lra}{\longrightarrow}
\newcommand{\lmt}{\longmapsto}
\newcommand{\x}{\times}
\newcommand{\ox}{\otimes}
\newcommand{\bigox}{\bigotimes}
\newcommand{\od}{\odot}
\newcommand{\bod}{\bigodot}
\newcommand{\w}{\wedge}
\renewcommand{\v}{\vee}
\newcommand{\R}{\mathbb{R}}
\newcommand{\N}{\mathbb{N}}
\newcommand{\Po}{\mathcal{P}}
\newcommand{\Li}{\mathcal{L}}
\newcommand{\e}{\varepsilon }
  \newcommand{\bx}{\mathrm{\bf x}}
\begin{document}
\title[A representation theorem for orthogonally additive polynomials]{A representation theorem for orthogonally additive polynomials on Riesz spaces}

\author{A. Ibort, P. Linares}
\address{Departamento de Matem{\'a}ticas, Universidad Carlos III de
  Madrid, Avda. de la Universidad 30, 28911 Legan{\'e}s, Spain}

\email{albertoi@math.uc3m.es}
\thanks{The first author was supported in part by Project MTM 2007-62478.
The second author was partially supported by the "Programa de formaci{\'o}n del profesorado
universitario del MEC". The second and third author were supported
in part by Project MTM 2006-03531.}
\thanks{The authors wish to thank the suggestions and observations of the referee that have helped greatly to shape the final form of this article}

\author{J.G. Llavona}
\address{Departamento de An{\'a}lisis Matem{\'a}tico, Facultad de Matem{\'a}ticas, Universidad Complutense de  Madrid, 28040 Madrid, Spain}
\email{jl\_llavona@mat.ucm.es,  plinares@mat.ucm.es}
\thanks{}

\subjclass[2010]{Primary 46A40, 46G25, Secondary 47B65. }
\keywords{Orthogonally additive polynomials, Riesz spaces}

\begin{abstract}
The aim of this article is to prove a representation theorem for orthogonally additive polynomials in the spirit of the recent theorem on representation of orthogonally additive polynomials on Banach lattices but for the setting of Riesz spaces. To this purpose the notion of $p$--orthosymmetric multilinear form is introduced and it is shown to be equivalent to the or\-tho\-go\-na\-lly additive property of the corresponding polynomial.    
Then the space of positive orthogonally additive polynomials on an Archimedean Riesz space taking values on an uniformly complete Archimedean Riesz space is shown to be isomorphic to the space of positive linear forms on the $n$-power in the sense of Boulabiar and Buskes of the original Riesz space.
\end{abstract}

\maketitle

\section{Introduction}

Given $X, Y$ two Banach lattices, a $n$-homogeneous polynomial $P\in\Po(^nX,Y)$ is said to be an orthogonally additive polynomial if $P(x+y)=P(x)+P(y)$ whenever $x$ and $y$ are disjoint elements of $X$. The first mathematician interested in that kind of polynomials was Sundaresan who, in 1991, obtained a representation theorem for polynomials on $\ell_{p}$ and on $L^{p}$.  P{\'e}rez-Garc{\'\i}a and Villanueva \cite{Perez Garcia-Villanueva 2005} proved the case $C(K)$ and the result was generalized to any Banach lattice by Benyamini, Lassalle and Llavona
\cite{Benyamini-Lassalle-Llavona 2006}. Carando, Lassalle and Zalduendo \cite{Carando-Lassalle-Zalduendo 2006} found an independent proof for the case $C(K)$ and Ibort, Linares and Llavona \cite{Ibort-Linares-Llavona 2009} gave another one for the case $\ell_{p}$.

In parallel to the study of the orthogonally additive polynomials in Banach lattices, there has been several authors interested in the study of an analogous concept for
multilinear forms in Riesz spaces.

Buskes and van Rooij introduced in 2000 the concept of orthosymmetric bilinear mapping in a Riesz space (see \cite{Buskes-Van Rooij1 2000}) They showed that the orthosymmetric
positive mappings are symmetric which lead to a new proof of the commutativity of $f$-algebras.

In 2004 the same authors studied the representation of orthosymmetric mappings in Riesz spaces introducing the notion of the square of a Riesz space and giving several characterizations of it in their article \cite{Buskes-Van Rooij 2004}. This study was generalized to the $n$-linear case by Boulabiar and Buskes \cite{Boulabiar-Buskes 2006} in 2006.

The square works in a similar way as the concavification for Banach lattices, so it naturally  raises the problem of the representation of orthogonally additive polynomials in Riesz spaces in analogy to the representation in Banach lattices.

The representation theorem  \cite{Benyamini-Lassalle-Llavona 2006}  for orthogonally $n$-homogeneous additive polynomials on a Banach lattice $E$ with values on a Banach space $F$, $\mathcal{P}_0({}^nE, F)$, establishes an isometry with the space of $F$--valued continuous  linear maps on the $n$-concavification of $E$, $\mathcal{L}(E_{(n)},F)$, given by $P(f) = T(f^n)$ for all $f\in E$, $P\in\mathcal{P}_0({}^nE,F)$ and $T \in \mathcal{L}(E_{(n)},F)$.
The aim of this article is to provide a representation theorem for orthogonally additive polynomials on Riesz spaces similar to this representation theorem.

Recall that a real vector space $E$ together with an order relation $\leq$  compatible with the algebraic operations in $E$ is called a Riesz space if, for every $x,y\in E$, there exists a least upper bound $x\v y$ and a greatest lower bound $x\w y$. We define the absolute value of $x\in E$ as $|x|=x\v (-x)$. An element $x\in E$ is said to be
positive if $x\geq 0$. The positive cone is the space     $$E^+=\{x\in E: x\geq 0\}.$$

Note that every $x\in E$ can be decomposed as the difference of two positive elements $x^+=x\v 0$ and $x^-=(-x)\v 0$. Furthermore, $|x|=x^++x^-$. A Riesz space will be called Archimedean if for every $x,y\in E^+$, such that $0\leq nx\leq y$ for every $n\in\N$ we have that $x=0$. For further information about Riesz spaces the reader is referred to the classical books \cite{Aliprantis-Bourkinshaw 2006} or \cite{Jonge-Van Rooij 1970}.

A mapping between two Riesz spaces $P:E\lra F$ is said to be a $n$-homogeneous polynomial if there exists a $n$-linear form $A:E\x\dots\x E\lra F$ such that $P(x)=A(x,\dots,x)$. There is a natural one-one correspondence between polynomials and symmetric $n$-linear forms.

Given $E_1,\dots,E_n,F,$ Riesz spaces, a multilinear form $A:E_1\x\dots\x E_n\lra F$ is said to be positive if $A(x_1,\dots,x_n)\geq 0$ for every $x_1,\dots, x_n$ positive elements. A polynomial is positive if its associated symmetric multilinear mapping is positive.

\section{Orthosymmetric Applications on Riesz spaces.}\label{Ortosymmetric}

Our proof of the representation theorem relies on the connection between the orthogonally additive polynomials and the orthosymmetric multilinear mappings. Two ingredients will be essential: we will need the equivalence of these concepts for a polynomial and its associated multilinear form and we will also need that the orthosymmetry and the positiveness guarantee the symmetry of a multilinear form.

Following Boulabiar and Buskes \cite{Boulabiar-Buskes 2006} a multilinear map $A:E\x\dots\x E\lra F$ between two Riesz spaces $E, F$ is said to be orthosymmetric if  $A(x_1,\dots, x_n)=0$ whenever $x_1,\dots, x_n\in E$ verifies $|x_i|\w |x_j|=0$ for some $i,j \in \{1,\dots,n\}$. 

The next result shows the relation between the orthosymmetry and the additive orthogonality using the proof of
the already quoted representation theorem \cite{Benyamini-Lassalle-Llavona 2006}. We will need a technical lemma.

\begin{Lemma}
Let $K$ be a compact Hausdorff space, $F$ be a Banach space and $P:C(K)\lra F$ a continuous polynomial. If $P(f+g)=P(f)+P(g)$ whenever $f,g\in C(K)$ verify $supp(f)\cap supp(g)=\emptyset$, then $P$ is orthogonally additive.
\end{Lemma}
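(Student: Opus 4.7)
The plan is to exploit the gap between the hypothesis and the conclusion: disjoint supports is a strictly stronger condition in $C(K)$ than lattice-disjointness (i.e.\ $|f|\w|g|=0$, which in $C(K)$ amounts to $f(x)g(x)=0$ for all $x\in K$). Two such lattice-disjoint functions can share support on the boundary where both vanish, e.g.\ $f(x)=x^+$ and $g(x)=x^-$ on $K=[-1,1]$. So I need to show that any lattice-disjoint pair can be approximated uniformly by pairs with genuinely disjoint supports, and then invoke continuity of $P$.

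First I would take $f,g\in C(K)$ with $|f|\w|g|=0$ and, for each $\e>0$, build truncations $f_\e,g_\e$ whose supports lie in $\{|f|\geq \e\}$ and $\{|g|\geq \e\}$ respectively. The clean way is to pick a continuous $h_\e\colon\R\to\R$ vanishing on $[-\e,\e]$, equal to the identity off $[-2\e,2\e]$, and linearly interpolated in between, and then set $f_\e:=h_\e\circ f$, $g_\e:=h_\e\circ g$. These are continuous, $\ninf{f_\e-f}\leq 2\e$ and $\ninf{g_\e-g}\leq 2\e$, and, crucially,
\[
\mathrm{supp}(f_\e)\subseteq \{x\in K:|f(x)|\geq \e\},\qquad \mathrm{supp}(g_\e)\subseteq \{x\in K:|g(x)|\geq \e\}.
\]

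Next I would observe that because $f(x)g(x)=0$ pointwise, the two closed sets $\{|f|\geq\e\}$ and $\{|g|\geq\e\}$ are disjoint, hence $\mathrm{supp}(f_\e)\cap\mathrm{supp}(g_\e)=\emptyset$. The hypothesis then yields $P(f_\e+g_\e)=P(f_\e)+P(g_\e)$ for every $\e>0$. Since $f_\e\to f$, $g_\e\to g$ and $f_\e+g_\e\to f+g$ uniformly on $K$ as $\e\to 0$, continuity of $P$ gives $P(f+g)=P(f)+P(g)$, which is exactly orthogonal additivity.

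There is no real obstacle here; the only subtle point is recognizing that orthogonally additive on $C(K)$ refers to the lattice notion $|f|\w|g|=0$ and that this is weaker than the topological disjoint-supports condition actually assumed, and then choosing a truncation by composition (rather than, say, multiplying by the sign of $f$, which need not be continuous) so that $f_\e\in C(K)$ and the uniform estimate $\ninf{f_\e-f}\leq 2\e$ comes for free from the uniform continuity of $h_\e$.
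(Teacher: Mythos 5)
Your proof is correct and follows essentially the same strategy as the paper's: uniformly approximate a lattice-disjoint pair by functions whose (topological) supports lie in $\{|f|\geq\e\}$ and $\{|g|\geq\e\}$, which are disjoint closed sets, then apply the hypothesis and pass to the limit using continuity of $P$. The only difference is cosmetic --- you truncate by composing with a piecewise-linear $h_\e$, while the paper multiplies by a Urysohn-type cutoff $\Phi_n$ supported in $\{|f|\geq 1/(n+1)\}$.
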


\begin{proof}
Let $f, g\in C(K)$ be two disjoint elements. Define for every $n \in \N$ the compact set $$K_n=\{x\in K: |f(x)|\geq 1/n\}$$ and a continuous function $\Phi_n: K\lra \R$
such that $0\leq \Phi_n\leq 1$, $\Phi_n=1$ on $K_n$ and $\Phi_n=0$ on $K-K_{n+1}$. Then $f\Phi_n$ converges uniformly to $f$ and $P(f)=\lim_n P(f\Phi_n)$.
Perfom the same construction for $g$ to get a sequence of functions $\Psi_n: K\lra \R$ with analogous properties such that $g\Psi_n$ converges uniformly to $g$ and $P(g)=\lim_n P(g\Psi_n)$. 

As $f$ and $g$ are disjoint, for every $n\in \N $, $supp(f\Phi_n)\cap supp(g\Psi_n)=\emptyset$ and hence $$P(f\Phi_n+g\Psi_n)=P(f\Phi_n)+P(g\Psi_n).$$

Taking limits, $P(f+g)=P(f)+P(g)$, so $P$ is orthogonally additive.
\end{proof}

\begin{Th}\label{Equiv definicion BB}
Let $E$ be a Riesz space, $F$ a Banach space and $A:E\times\dots\times E\lra F$ a
symmetric positive multilinear form in $E$. $A$ is
orthosymmetric if and only if $P=\hat{A}$ is orthogonally
additive.
\end{Th}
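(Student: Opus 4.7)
The forward implication ($A$ orthosymmetric $\Rightarrow P$ orthogonally additive) is immediate: for $x,y\in E$ with $|x|\wedge|y|=0$, expanding $P(x+y)=A(x+y,\ldots,x+y)$ by multilinearity gives $2^n$ summands $A(z_1,\ldots,z_n)$ with $z_i\in\{x,y\}$, and any mixed summand contains a disjoint pair of entries and so vanishes by orthosymmetry; what remains is $P(x)+P(y)$.

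For the converse I would first reduce to positive data via $x_k=x_k^+-x_k^-$ and multilinearity of $A$: the disjointness $|x_i|\wedge|x_j|=0$ passes to every cross term, so we may assume all $x_k\ge 0$ with, say, $x_1\wedge x_2=0$. Set $u=x_1+\cdots+x_n$ and work in the principal order ideal $E_u$, which has $u$ as a strong order unit (assuming Archimedean, as is implicit in the paper's setting); by Kakutani's representation theorem $E_u$ embeds as a dense Riesz subspace of some $C(K)$, the order-unit norm agreeing with the sup norm. Positivity of $A$ gives the bound $\|A(y_1,\ldots,y_n)\|_F \le 2^n\|A(u,\ldots,u)\|_F\,\|y_1\|_u\cdots\|y_n\|_u$ (decompose $y_k=y_k^+-y_k^-$ with $y_k^\pm\le \|y_k\|_u\,u$), so $A$ extends uniquely to a continuous positive symmetric multilinear form $\tilde A:C(K)^n\to F$ whose associated polynomial $\tilde P$ extends $P|_{E_u}$.

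The key step is then to show $\tilde P$ is orthogonally additive on $C(K)$. Given positive disjoint $\phi,\psi\in C(K)$, pick $\phi_k,\psi_k\in E_u^+$ with $\phi_k\to\phi$ and $\psi_k\to\psi$ in sup norm and truncate: $\phi_k':=(\phi_k-\psi_k)^+$ and $\psi_k':=(\psi_k-\phi_k)^+$ lie in $E_u$ and are disjoint there, so orthogonal additivity of $P$ yields $P(\phi_k'+\psi_k')=P(\phi_k')+P(\psi_k')$; sup-norm continuity of the lattice operations in $C(K)$ together with $\phi\wedge\psi=0$ gives $\phi_k'\to\phi$, $\psi_k'\to\psi$, and passing to the limit yields orthogonal additivity of $\tilde P$ on positive disjoint pairs, the signed case following by splitting into four pairwise disjoint positive parts. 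One could alternatively check only the easier disjoint-support case on $C(K)$ and invoke the preceding lemma to upgrade to full orthogonal additivity, which appears to be its intended role.

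With $\tilde P$ orthogonally additive on the Banach lattice $C(K)$, the Benyamini--Lassalle--Llavona representation theorem (equivalently, the earlier $C(K)$ result of P\'erez-Garc\'\i a--Villanueva) furnishes a continuous linear $T:C(K)\to F$ with $\tilde P(f)=T(f^n)$; uniqueness of polarization then gives $\tilde A(f_1,\ldots,f_n)=T(f_1\cdots f_n)$ on positive tuples. Since $x_1\wedge x_2=0$ in $E_u$ corresponds to $f_1 f_2=0$ pointwise on $K$, the pointwise product $f_1 f_2\cdots f_n$ vanishes and $A(x_1,\ldots,x_n)=T(0)=0$. The principal obstacle I anticipate is the transfer of orthogonal additivity from the dense subspace $E_u$ to $C(K)$: one must verify that the positivity bound is strong enough to produce the continuous extension $\tilde A$ and that the truncations $(\phi_k-\psi_k)^+$ remain in $E_u$, converge to $\phi$, and are genuinely disjoint from $\psi_k'$ at each stage so that $P$'s orthogonal additivity can be invoked.
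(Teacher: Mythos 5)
Your proposal is correct and follows essentially the same route as the paper: localize to the ideal generated by the $x_k$ (with unit $|x_1|+\dots+|x_n|$), represent it via Yosida/Kakutani as a dense Riesz subspace of some $C(K)$, extend $P$ there, transfer orthogonal additivity to the extension by approximating disjoint elements from the dense subspace, and then invoke the Benyamini--Lassalle--Llavona theorem to read off orthosymmetry from $\tilde A(f_1,\dots,f_n)=T(f_1\cdots f_n)$. The only cosmetic difference is that you use lattice truncations $(\phi_k-\psi_k)^+$ to manufacture disjoint approximants, whereas the paper multiplies by Urysohn functions (exploiting that the image of $E_0$ is an ideal) and reduces to the disjoint-support case via its preliminary lemma.
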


\begin{proof}
It is clear that if $A$ is orthosymmetric, expanding
$$P(x+y)=A(x+y,\dots,x+y)$$ for disjoint $x$ and $y$, the unique
nonzero terms are $A(x,\dots,x)$ and $A(y,\dots,y)$ so $P$ is
orthogonally additive.

Assume that $P$ is orthogonally additive. Let $x_1,\dots,x_n$ be
elements of $E$ at least two of which are disjoint. Consider $E_0\subset
E$ to be the ideal generated by $x_1,\dots,x_n$ whose unit is
$|x_1|+\dots +|x_n|$. By Yosida's Representation Theorem (see for instance
\cite{Jonge-Van Rooij 1970} Theorem 13.11), $E_0$ is Riesz isomorphic to a dense
ideal $\hat{E_0}$ of $C(K)$ for certain compact Hausdorff $K$.
Let $\hat{P}$ be the polynomial on $\hat{E_0}$ defined by the
composition of the previous isomorphism with $P$. As the
isomorphism preserves the order, $\hat{P}$ is positive and
orthogonally additive. 

Extend $\hat{P}$ by denseness to a positive orthogonally additive $n$-homogeneous polynomial  $\tilde{P}$ on $C(K)$.  For this purpose, by the previous lemma, we just have to show for $f,g\in C(K)$ with $supp(f)\cap supp(g)=\emptyset$ that $\tilde{P}(f+g)=\tilde{P}(f)+\tilde{P}(g)$. Take such $f$ and $g$ and denote by $K_1=supp(f)$ and by $K_2=supp(g)$. As these sets are compact, there exists two open sets $U_1, U_2$ on $K$ such that $K_i\subset U_i$, $i=1,2$ and $\overline{U}_1\cap \overline{U}_2=\emptyset$. Define two functions $\Phi_i\in C(K)$ such that $0\leq \Phi_i\leq 1$, $\Phi_i=1$ on $K_i$ and $\Phi_i=0$ on $K- U_i$, $i=1,2$.

Let be $x_n, y_n \in  \hat{E_0}$ converging to $f$ and $g$ respectively. As $\hat{E_0}$ is an ideal, for every $n\in \N$, $x_n\Phi_1, y_n\Phi_2\in \hat{E_0}$. As $x_n\Phi_1$ and $y_n\Phi_2$ converge respectively to $f$ and $g$ and verify $supp(x_n\Phi_1)\cap supp(y_n\Phi_2)=\emptyset$, by the orthogonality of $\hat{P}$ on  $\hat{E_0}$, $$\hat{P}(x_n\Phi_1+y_n\Phi_2)=\hat{P}(x_n\Phi_1)+\hat{P}(y_n\Phi_2)$$
taking limits $\tilde{P}(f+g)=\tilde{P}(f)+\tilde{P}(g)$ and then $\tilde{P}$ is orthogonally additive. Note that this extension, being positive, will also be continuous (see
\cite{Grecu-Ryan 2004} Proposition 4.1).

Hence, by the representation theorem of orthogonally additive polynomials \cite{Benyamini-Lassalle-Llavona 2006} and taking into account that $C(K)_{(n)} = 
C(K)$ there exists $T \in \mathcal{L}(C(K)_{(n)}, F)$  
such that
    $$\tilde{P}(f)= T(f^n) .$$
The symmetric $n$-linear form associated to $\tilde{P}$ is
given by
    $$(f_1,\dots,f_n)\longmapsto T(f_1\cdots f_n),$$
then, this form as well as its restriction to $\hat{E_0}$ are
orthosymmetric and by the Riesz isomorphism between $E_0$ and
$\hat{E_0}$, $A$ will also be orthosymmetric.
\end{proof}

In spite of this result we are still lacking of a natural and explicit representation of an orthogonally
additive polynomial on a Riesz space as a linear form on a Riesz space.     Such space would 
play the role of the $n$--concavification of Banach lattices used in the representation
theorem for orthogonally additive polynomials on Banach lattices.  The notion of $n$-power of
Riesz spaces introduced by Boulabiar and Buskes \cite{Boulabiar-Buskes 2006} will be the appropriate tool for that.

We introduce a new
definition of orthosymmetry with the properties needed to
actually prove our claims.   The definition of orthosymmetry that we are going
to use is more involved than the definition of orthosymmetry above but it
will lead us to the proof the the main statement (see Theorem \ref{n-ortos=ortog aditivo} below). 

To begin with, we introduce the concept of $p$-disjoint
elements:

\begin{Def}
We will say that $x_1,\dots,x_n\in E$ are partitionally disjoint
or p-disjoint if there exists a partition $I_1,\dots,I_m$, $2\leq
m\leq n$, of the index set $I=\{1,\dots,n\}$ such that the sets
    $$\{x_{i_k}: i_k\in I_k\}$$
are disjoint, that is $|x_{i_k}|\wedge|x_{i_l}|=0$ whenever $k\neq
l $.
\end{Def}

\begin{Def}\index{Orthosymmetric mapping}
Given $E,F$ Riesz spaces, a $n$-linear application
$A:E\times\dots\times E\lra F$ is said to be $p$-orthosymmetric if
$A(x_1,\dots,x_n)=0$ for $x_1,\dots,x_n$ $p$-disjoint.
\end{Def}

In order to prove that this new definition is coherent with the
additive orthogonality of polynomials, we need a preliminary lemma:

\begin{Lemma}
Let $E, F$ be Riesz spaces and $A:E\times\dots\times E\lra F$ a
symmetric $n$-linear form. The following assumptions are
equivalent
\begin{enumerate}
\item $A$ is $p$-orthosymmetric.
\item $A(x^i,y^{n-i})=0$ for $x$ and $y$ disjoint and $1<i<n$.
\end{enumerate}
\end{Lemma}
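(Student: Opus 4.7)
The plan is to treat the two implications separately, with (1) $\Rightarrow$ (2) following by inspection and (2) $\Rightarrow$ (1) requiring a polarization argument combined with the fact that disjointness is preserved by linear combinations.

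For (1) $\Rightarrow$ (2), I would observe that for disjoint $x, y \in E$ and any admissible $i$, the tuple consisting of $i$ copies of $x$ followed by $n-i$ copies of $y$ is $p$-disjoint via the obvious partition $\{1,\ldots,i\} \cup \{i+1,\ldots,n\}$: the elements in the two blocks are $x$ and $y$, and $|x| \wedge |y| = 0$. Hypothesis (1) then immediately yields $A(x^i, y^{n-i}) = 0$.

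For (2) $\Rightarrow$ (1), I would first reduce the general case to that of a two-block partition. Given a $p$-disjoint tuple $(x_1,\ldots,x_n)$ with partition $I_1,\ldots,I_m$, $m \geq 3$, I would merge $I_2,\ldots,I_m$ into a single block; since every element of $I_1$ is disjoint from every element of $I_k$ for $k \geq 2$, the resulting two-block partition still witnesses $p$-disjointness. So without loss of generality the tuple has the form $(u_1,\ldots,u_p,v_1,\ldots,v_q)$ with $p+q = n$ and $|u_k| \wedge |v_l| = 0$ for all $k,l$.

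The core step is polarization. Introducing scalar parameters, set $X = \sum_k s_k u_k$ and $Y = \sum_l t_l v_l$. Using $|X| \leq \sum_k |s_k||u_k|$ together with the lattice subadditivity $(a+b)\wedge c \leq (a\wedge c)+(b\wedge c)$ valid on $E^+$, one checks that $|X| \wedge |Y| = 0$, i.e.\ $X$ and $Y$ are disjoint for every choice of parameters. Hypothesis (2) then yields $A(X^p,Y^q) = 0$ as a polynomial identity in the scalars $s_k,t_l$. Expanding by multilinearity, extracting the coefficient of the monomial $s_1\cdots s_p\,t_1\cdots t_q$, and using the symmetry of $A$ gives $p!\,q!\,A(u_1,\ldots,u_p,v_1,\ldots,v_q) = 0$, as required. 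The main technical point I anticipate is the disjointness-preservation step for the linear combinations $X$ and $Y$; this is a standard Riesz-space computation relying on the subadditivity of $\wedge$, and I expect the rest to be routine polarization bookkeeping.
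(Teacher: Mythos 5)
Your proposal is correct and follows essentially the same route as the paper: reduce to a two-block partition, observe that arbitrary linear combinations taken within each block remain mutually disjoint, and then recover the value $A(u_1,\dots,u_p,v_1,\dots,v_q)$ by polarization from the vanishing of $A(X^p,Y^q)$ guaranteed by hypothesis (2) (the paper packages this as two successive applications of the polarization formula with $\pm 1$ signs, which is the same coefficient extraction you describe). Your explicit merging of $I_2,\dots,I_m$ into a single block is a welcome clarification of the paper's ``the general case is analogous,'' and like the paper you implicitly use symmetry to cover the case where one block is a singleton (so that the exponent falls in the range $1<i<n$), which requires $n\geq 3$.
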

\begin{proof}
If $A$ is $p$-orthosymmetric, it is obvious that
$A(x^i,y^{n-i})=0$ for disjoint $x$ and $y$ with $1<i<n$.

Conversely, let $\{x_1,\dots, x_n\}$ be $p$-disjoint. Assume
that the partition in disjoint subsets is given by two elements,
that is there exists some $1<i<n$ such that the sets
$\{x_1,\dots,x_i\}$ and $\{x_{i+1}\dots,x_n\}$ are disjoints. The
general case is analogous.

The notation $B=A_{x_{i+1},\dots, x_n}$ will represent the
multilinear form $B$ defined by $B(y_1,\dots, y_i)=A(y_1,\dots,
y_i,x_{i+1},\dots, x_n)$. By the Polarization Formula,

$$
A(x_1,\dots,x_n)=B(x_1,\dots,x_i)=\frac{1}{i!2^i}\sum_{\e_j=\pm
1}\e_1\cdots \e_i B(\e_1 x_1+\dots +\e_ix_i)^i
$$
where $B(x)^i=B(x,\overbrace{\dots}^i,x)$.

We conclude by showing that for each choice of signs $\e_j=\pm 1$,
$$B(\e_1 x_1+\dots+ \e_i x_i)^i=0.$$

In order to simplify, let $ \e \bx=\e_1 x_1+\dots +\e_i x_i$, note
that

  \begin{eqnarray*}
    B(\e\bx)^i&=&B(\e\bx,\dots,\e\bx)=
    A(\e\bx,\dots,\e\bx,x_{i+1},\dots,x_n)=\\
    & & A_{\e\bx,\dots,\e\bx}(x_{i+1},\dots,x_n)=C(x_{i+1},\dots,x_n)
\end{eqnarray*}
with the notations above.

Again by using the polarization formula, we get:

 $$C(x_{i+1},\dots,x_n)= \frac{1}{(n-i)!2^{n-i}}\sum_{\delta_k=\pm 1}
\delta_1\cdots\delta_{n-i}C(\delta_1
x_{i+1}+\dots+\delta_{n-i}x_n)^{n-i}$$ as $\e_1 x_1+\dots +\e_i
x_i$ and $\delta_1 x_{i+1}+\dots + \delta_{n-i} x_n$ are disjoint
for every choice of signs $\e_j=\pm 1$ and $\delta_k=\pm 1$ our
assumption allows us to conclude that the summands appearing in
the previous expressions are zero and hence $A(x_1,\dots,x_n)=0$,
as needed.
\end{proof}

The next result can be obtained as a straightforward generalization of Proposition 2.4 on \cite{Perez Garcia-Villanueva 2005}.

\begin{Th}\label{n-ortos=ortog aditivo}
Let $E,F$ be Riesz spaces and $A:E\times\dots\times E\lra F$ a
symmetric multilinear form in $E$. $A$ is $p$-orthosymmetric if
and only if $P=\hat{A}$ is orthogonally additive.
\end{Th}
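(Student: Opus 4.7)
The plan is to use the preceding lemma as the main reduction. The forward implication follows from a direct multinomial expansion; the content lies in the converse, which I will prove via a one-parameter scaling trick (in the spirit of Proposition 2.4 of P\'erez-Garc\'ia and Villanueva) combined with an elementary Vandermonde inversion.

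For the direct implication, assume $A$ is $p$-orthosymmetric and let $x,y \in E$ be disjoint. Multilinearity and symmetry give
$$P(x+y) \;=\; A(x+y,\dots,x+y) \;=\; \sum_{i=0}^{n}\binom{n}{i}\, A(\underbrace{x,\dots,x}_{i},\underbrace{y,\dots,y}_{n-i}).$$
For each $1\leq i\leq n-1$, the $n$-tuple $(x,\dots,x,y,\dots,y)$ is $p$-disjoint via the partition $\{1,\dots,i\}\cup\{i+1,\dots,n\}$ of the position indices, and so the corresponding summand vanishes by $p$-orthosymmetry. Only $A(x,\dots,x)=P(x)$ and $A(y,\dots,y)=P(y)$ survive, giving $P(x+y)=P(x)+P(y)$.

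For the converse, assume $P$ is orthogonally additive. By the preceding lemma it is enough to show that $A(x^{i},y^{n-i})=0$ for disjoint $x,y\in E$ and $1<i<n$; the argument below actually establishes this for all $1\leq i\leq n-1$. The key observation is that for every $t\in\R$ the elements $tx$ and $y$ remain disjoint, since $|tx|\w|y| = |t|(|x|\w|y|) = 0$ using $c(a\w b)=(ca)\w(cb)$ for $c\geq 0$. Orthogonal additivity combined with $n$-homogeneity of $P$ therefore yields
$$P(tx+y) \;=\; P(tx)+P(y) \;=\; t^{n}P(x)+P(y),$$
while the multilinear expansion gives
$$P(tx+y) \;=\; \sum_{i=0}^{n}\binom{n}{i}\, t^{i}\, A(x^{i},y^{n-i}).$$
Subtracting, the $F$-valued polynomial identity
$$\sum_{i=1}^{n-1}\binom{n}{i}\, t^{i}\, A(x^{i},y^{n-i}) \;=\; 0$$
holds for every $t\in\R$. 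Evaluating at $n-1$ distinct nonzero reals $t_{1},\dots,t_{n-1}$ produces a linear system in the unknowns $A(x^{i},y^{n-i})\in F$ whose coefficient matrix $(t_{j}^{i})_{i,j}$ is essentially Vandermonde and hence invertible over $\R$; applying its (real) inverse entrywise to the zero right-hand side forces each $A(x^{i},y^{n-i})=0$, which is exactly condition (2) of the previous lemma.

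The only subtle point is that the scaling trick requires no structure on $F$ beyond being a real vector space, and no topology or completeness on $E$. In contrast with the proof of Theorem \ref{Equiv definicion BB}, neither Yosida's representation nor any denseness/continuity argument is needed: the reason is that $p$-orthosymmetry is genuinely weaker than Boulabiar--Buskes orthosymmetry, so the converse direction only has to kill the mixed binomial terms appearing in $P(tx+y)$, and the one-parameter family $\{tx+y:t\in\R\}$ detects precisely those terms.
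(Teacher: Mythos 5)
Your proof is correct and is essentially the argument the paper intends: the paper gives no written proof of this theorem, remarking only that it is a straightforward generalization of Proposition 2.4 of P{\'e}rez-Garc{\'\i}a and Villanueva, and your expansion of $P(tx+y)$ in powers of $t$, killed off by a Vandermonde system and fed into condition (2) of the preceding lemma, is precisely that generalization. One small repair: $|tx|\w|y|=|t|\bigl(|x|\w|y|\bigr)$ is not a valid Riesz-space identity (a positive scalar distributes over $\w$ only when applied to both arguments); the fact you actually need, that $|x|\w|y|=0$ forces $(|t|\,|x|)\w|y|=0$, follows instead from $(|t|\,|x|)\w|y|\le (n|x|)\w|y|\le n\bigl(|x|\w|y|\bigr)=0$ for any integer $n\ge|t|$, using subadditivity of $\cdot\w|y|$ on the positive cone.
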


Finally, we will see that the $p$-orthosymmetry and the
positiveness guarantee the symmetry as in the bilinear case. The
following lemma generalizes Theorem 1 in \cite{Buskes-Van Rooij1
2000}. To prove it we will follow a similar path from which it
will be clear the reasons for our definition of orthosymmetry.

\begin{Lemma}\label{Lemma Symmetry}
Let $K$ be a compact Hausdorff space, $E$ a uniformly dense Riesz
subspace of $C(K)$, $F$ an Archimedean Riesz space and
$T:E\x\dots\x E\lra F$ a positive $p$-orthosymmetric $n$-linear
map. Let $E^n$ the linear hull of the set
    $$\{f_1\cdots f_n: f_1,\dots, f_n\in E\}.$$
Then there exists a positive linear map $A: E^n\lra F$ such that

    $$T(f_1,\dots,f_n)=A(f_1\cdots f_n) \textrm{ for every } f_1,\dots, f_n \in E.$$
\end{Lemma}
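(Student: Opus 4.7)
The natural definition is
\[
A(f_1 f_2 \cdots f_n) := T(f_1,\dots,f_n), \qquad f_i\in E,
\]
extended by linearity to $E^n$. Once well-defined, linearity is automatic, so the substance of the proof splits into two tasks: (i) check that $T$ respects all linear relations among products $f_1\cdots f_n$ inside $C(K)$, and (ii) deduce that the resulting linear map is positive.

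My first step is to show that $T$ is symmetric, extending Theorem~1 of \cite{Buskes-Van Rooij1 2000} from the bilinear to the $n$-linear setting. As in the bilinear case, the proof should rest on Freudenthal-type decompositions of the arguments into positive and pairwise lattice-disjoint pieces, so that a transposition of two entries in $T$ produces a difference whose non-trivial summands have $p$-disjoint arguments and are therefore annihilated by $p$-orthosymmetry; the Archimedean property of $F$ absorbs any approximation errors. With $T$ symmetric, the polarization identity
\[
T(f_1,\dots,f_n)=\frac{1}{n!\,2^n}\sum_{\e_j=\pm 1}\e_1\cdots\e_n\,P(\e_1 f_1+\cdots+\e_n f_n),\qquad P(f):=T(f,\dots,f),
\]
together with the parallel polarization of $f_1\cdots f_n$ inside the commutative algebra $C(K)$, reduces (i) to the single implication: if $\sum_k\lambda_k h_k^n=0$ in $C(K)$ with $h_k\in E$, then $\sum_k\lambda_k P(h_k)=0$ in $F$.

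I expect this last implication to be the main obstacle. The plan is a squeeze argument enabled by uniform density. Fix $u\in E^+$ with $u\geq\mathbf{1}_K$, which is available because $E$ is dense in $C(K)$; then for every $\varepsilon>0$ the identity $\sum_k\lambda_k h_k^n=0$ trivially gives the sandwich $-\varepsilon u^n\leq\sum_k\lambda_k h_k^n\leq\varepsilon u^n$ in $C(K)$. Feeding this pointwise inequality back through the polarization formula and using positivity of $T$ should produce the corresponding order inequality $-\varepsilon\,T(u,\dots,u)\leq\sum_k\lambda_k P(h_k)\leq\varepsilon\,T(u,\dots,u)$ in $F$, after which the Archimedean property forces the sum to vanish. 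The same squeeze delivers an order-boundedness estimate $|A(h)|\leq M\,T(u,\dots,u)$ whenever $|h|\leq M u^n$ in $C(K)$; to obtain positivity in (ii), given $h\in E^n$ with $h\geq 0$ in $C(K)$ I would uniformly approximate $h^{1/n}\in C(K)^+$ by $g_m\in E^+$, combine $P(g_m)\geq 0$ with $|A(h-g_m^n)|\to 0$ coming from the estimate, and conclude $A(h)\geq 0$ via the Archimedean property one last time.
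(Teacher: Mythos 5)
There is a genuine gap at the heart of step (i). You reduce well-definedness to the implication ``$\sum_k\lambda_k h_k^n=0$ in $C(K)$ $\Rightarrow$ $\sum_k\lambda_k P(h_k)=0$ in $F$'' and then claim this follows by sandwiching $\sum_k\lambda_k h_k^n$ between $\pm\varepsilon u^n$ and ``feeding this pointwise inequality back through the polarization formula and using positivity of $T$.'' But positivity of $T$ only gives monotonicity in each slot separately for positive arguments; it provides no link whatsoever between the value $T(h,\dots,h)\in F$ and the pointwise function $h^n\in C(K)$. That link is precisely the content of the lemma, so the step as described is circular. Worse, your derivation of the implication never uses $p$-orthosymmetry (you invoke it only to get symmetry of $T$), and the implication is simply false for symmetric positive maps that are not orthosymmetric: take $F=\mathbb{R}$, a probability measure $\mu$ on $K$, and $T(f_1,\dots,f_n)=\prod_{i}\int f_i\,\rd\mu$. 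This $T$ is positive, $n$-linear and symmetric, yet for $h_1$ with $\int h_1\,\rd\mu=0$ and $h_2=|h_1|\neq 0$ one has $h_1^2-h_2^2=0$ while $P(h_1)-P(h_2)=-\bigl(\int|h_1|\,\rd\mu\bigr)^2\neq 0$ (case $n=2$). Since your squeeze uses only hypotheses satisfied by this $T$, it cannot be valid as stated. The positivity argument (ii) inherits the same problem, because the order-boundedness estimate $|A(h)|\leq M\,T(u,\dots,u)$ rests on the same unproved squeeze.

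The paper's proof closes exactly this gap by a different and more concrete route: it sets $A(h)=T(\mathbf{1},\dots,\mathbf{1},h)$ (so linearity and positivity of $A$ are immediate and no well-definedness issue arises) and proves the identity $T(f_1,\dots,f_n)=T(\mathbf{1},\dots,\mathbf{1},f_1\cdots f_n)$ by choosing a partition of unity $u_1,\dots,u_N$ whose supports $S_j$ are ``small'' for $f_1,\dots,f_n,h$, replacing each $f_i$ by the step-like function $\sum_j f_i(s_j)u_j$, and estimating the three resulting error terms. The decisive point is that $p$-orthosymmetry kills every term $T(u_{j_1},\dots,u_{j_n})$ with $p$-disjoint arguments, while for the surviving tuples the supports are chained together so that $|f_i(s_{j_i})-f_i(s_{j_n})|\leq n\varepsilon$; the Archimedean property of $F$ then finishes the argument. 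If you want to salvage your plan, you would need to prove your key implication by essentially this same small-support decomposition, at which point you might as well prove the paper's identity directly.
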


\begin{proof}
As in \cite{Buskes-Van Rooij1 2000} there is no loss of generality
if we consider $E=C(K)$. Define
$A(h)=T(\mathbf{1},\dots,\mathbf{1},h)$ where $\mathbf{1}$ stands
for the function identically 1. It is clear that $A$ is linear and
positive. Note that as $T$ is positive by \cite{Grecu-Ryan 2004}
Proposition 4.1, it is continuous. We have to prove that if
$f_1,\dots, f_n\in C(K)$ and $h=f_1\cdots f_n$,
$T(f_1,\dots,f_n)=T(\mathbf{1},\dots,\mathbf{1},h)$.

Given $\e>0$, we will say, following \cite{Buskes-Van Rooij1
2000}, that $X\subset K$ is small if for every $x,y\in X$
    $$|f_1(x)-f_1(y)|<\e, \dots, |f_n(x)-f_n(y)|<\e \textrm{ and } |h(x)-h(y)|<\e.$$

Take $u_1,\dots, u_N\in C(K)^+$ verifying $\sum u_n= \mathbf{1}$
and such that for every $j$, the set $S_j=\{x\in K: u_j(x)\neq
0\}$ is small and nonempty, and take $s_j\in S_j$, for $j=1,\dots,
N$. Define for $i=1,\dots, n$,

    $$f_i'=\sum_{j=1}^N f(s_j)u_j \textrm{ and } h'=\sum_{j=1}^N h(s_j)u_j.$$

Because
\begin{eqnarray*}
|T(f_1,\dots,f_n)-T(\mathbf{1},\dots,\mathbf{1},h))| &\leq &
 |T(f_1,\dots,f_n)-T(f_1',\dots,f_n')| \\
&+&|T(f_1',\dots,f_n')-T(\mathbf{1},\dots,\mathbf{1},h'))|  \\
&+&|T(\mathbf{1},\dots,\mathbf{1},h'))-T(\mathbf{1},\dots,\mathbf{1},h))|
\end{eqnarray*}
we have to bound these three terms. The second is immediate:

\begin{eqnarray*}
|T(f_1',\dots,f_n')-T(\mathbf{1},\dots,\mathbf{1},h'))| \leq
\|h-h'\|_\infty |T(\mathbf{1},\dots,\mathbf{1})|\leq \e
T(\mathbf{1},\dots,\mathbf{1}).
\end{eqnarray*}

For the first

\begin{eqnarray*}
&&|T(f_1,\dots,f_n)-T(f_1',\dots,f_n')|\leq
|T(f_1,\dots,f_n)-T(f_1',f_2\dots,f_n)|+ \\
&& |T(f_1',f_2,\dots,f_n)-T(f_1',f_2',f_3\dots,f_n)|+\dots +|T(f_1',\dots, f_{n-1}',f_n)\\
&& -T(f_1',\dots,f_n')|=\sum_{i=1}^n|T(f_1',\dots, f_{i-1}',f_i-f_i',\dots, f_n)|\leq\\
&&  \sum_{i=1}^n
\|f_1'\|_\infty\cdots\|f_i-f_i'\|_\infty\cdots\|f_n\|_\infty
T(\mathbf{1},\dots,\mathbf{1}) \leq\\
&& \e T(\mathbf{1},\dots,\mathbf{1}) \sum_{i=1}^n
\|f_1\|_\infty\cdot\overbrace{\dots}^{[i]}\cdot\|f_n\|_\infty
\end{eqnarray*}

With the notation $\overbrace{\dots}^{[i]}$ we mean that in the
product, the factor $i$ does not appear. Finally

{\setlength\arraycolsep{0pt}
\begin{eqnarray*}
&&|T(\mathbf{1},\dots,\mathbf{1},h'))-T(\mathbf{1},\dots,\mathbf{1},h))|
\leq \\
&& \sum_{j_1,\dots,j_n} |f_1(s_{j_1})\cdots
f_{n-1}(s_{j_{n-1}})-f_1(s_{j_n})\cdots
f_{n-1}(s_{j_n})||f_n(s_n)|T(u_{j_1},\dots, u_{j_n})\\
&&\leq \|f_n\|_\infty\sum_{j_1,\dots,j_n}|f_1(s_{j_1})\cdots
f_{n-1}(s_{j_{n-1}})-f_1(s_{j_1})\cdots
f_{n-2}(s_{j_{n-2}})f_{n-1}(s_{j_n})|T_{j_1,\dots,j_n} \\
&&+\|f_n\|_\infty\sum_{j_1,\dots,j_n}|f_1(s_{j_1})\cdots
f_{n-2}(s_{j_{n-2}})-f_1(s_{j_n})\cdots
f_{n-2}(s_{j_n})||f_{n-1}(s_{j_n})|T_{j_1,\dots,j_n}\\
&&\leq \|f_1\|_\infty\overbrace{\dots}^{[n-1]}\|f_n\|_\infty
\sum_{j_1,\dots,j_n}|f_{n-1}(s_{j_{n-1}})-f_{n-1}(s_{j_{n}})|
T_{j_1,\dots,j_n}\\
&& +\|f_{n-1}\|_\infty\|f_{n-1}\|_\infty
\sum_{j_1,\dots,j_n}|f_1(s_{j_1})\cdots
f_{n-2}(s_{j_{n-2}})-f_1(s_{j_n})\cdots
f_{n-2}(s_{i_n})|T_{j_1,\dots,j_n}
\end{eqnarray*}
}

Where the notation $T_{j_1,\dots,j_n}$ stands for
$T(u_{j_1},\dots, u_{j_n})$. If we repeat the process, we get:
{\setlength\arraycolsep{0pt}
\begin{eqnarray*}
&&|T(\mathbf{1},\dots,\mathbf{1},h'))-T(\mathbf{1},\dots,\mathbf{1},h))|
\leq \\
&& \sum_{i=1}^n
\|f_1\|_\infty\overbrace{\dots}^{[i]}\|f_n\|_\infty
\sum_{j_1,\dots,j_n}|f_{i}(s_{j_{i}})-f_{n-1}(s_{j_{n}})|
T(u_{j_1},\dots, u_{j_n})
\end{eqnarray*}
}

Now, there are two options, if $u_{j_1},\dots, u_{j_n}$ are
$p$-disjoint, then $$T(u_{j_1},\dots, u_{j_n})=0,$$ otherwise
there is a path connecting the set $S_{j_i}$ and the set
$S_{j_n}$, hence
    $$|f_{i}(s_{j_{i}})-f_{n-1}(s_{j_{n}})|\leq n\e$$

We conclude that

{\setlength\arraycolsep{0pt}
\begin{eqnarray*}
&&|T(\mathbf{1},\dots,\mathbf{1},h'))-T(\mathbf{1},\dots,\mathbf{1},h))|
\leq \\
&& \sum_{i=1}^n
n\e\|f_1\|_\infty\overbrace{\dots}^{[i]}\|f_n\|_\infty
\sum_{j_1,\dots,j_n} T(u_{j_1},\dots, u_{j_n})\\
&&\e T(\mathbf{1},\dots,\mathbf{1})\sum_{i=1}^n
n\|f_1\|_\infty\overbrace{\dots}^{[i]}\|f_n\|_\infty
\end{eqnarray*}
}
\end{proof}

As a direct consequence of the previous lemma we obtain the
following result:

\begin{Th}\label{n-ortos y posi= simetrico}
Let $E$ and $F$ be Archimedean Riesz spaces. Let $T:E\x\dots\x
E\lra F$ be an $p$-orthosymmetric positive $n$-linear map. Then
$T$ is symmetric.
\end{Th}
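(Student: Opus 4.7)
The strategy is to reduce the symmetry question to Lemma \ref{Lemma Symmetry} by localising the problem to an ideal that is Riesz-isomorphic to a dense subspace of some $C(K)$, exactly as in the proof of Theorem \ref{Equiv definicion BB}. Once we are inside $C(K)$, the lemma represents the multilinear form by a linear map applied to the pointwise product $f_1\cdots f_n$, and the commutativity of that product forces symmetry.

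More precisely, to check that $T(x_1,\dots,x_n)=T(x_{\sigma(1)},\dots,x_{\sigma(n)})$ for a permutation $\sigma$ and elements $x_1,\dots,x_n\in E$, I would first take $E_0\subset E$ to be the ideal generated by $x_1,\dots,x_n$, which has unit $e=|x_1|+\dots+|x_n|$. By Yosida's representation theorem (as invoked in Theorem \ref{Equiv definicion BB}), $E_0$ is Riesz isomorphic to a uniformly dense Riesz subspace $\hat{E_0}$ of $C(K)$ for some compact Hausdorff $K$. Transporting the restriction of $T$ to $E_0\times\dots\times E_0$ via this isomorphism yields a map $\hat{T}:\hat{E_0}\times\dots\times\hat{E_0}\lra F$ which is $n$-linear, positive, and $p$-orthosymmetric, since the isomorphism preserves the order structure, disjointness, and hence $p$-disjointness.

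Lemma \ref{Lemma Symmetry} then applies to $\hat{T}$ and produces a positive linear map $A:\hat{E_0}^n\lra F$ such that
$$\hat{T}(f_1,\dots,f_n)=A(f_1\cdots f_n)\quad\text{for all }f_1,\dots,f_n\in \hat{E_0}.$$
Because multiplication in $C(K)$ is commutative, $f_1\cdots f_n = f_{\sigma(1)}\cdots f_{\sigma(n)}$, so the right-hand side is invariant under permutations of the arguments. Hence $\hat{T}$ is symmetric, and pulling this identity back through the Riesz isomorphism gives the symmetry of $T$ on $E_0\times\dots\times E_0$, and in particular at $(x_1,\dots,x_n)$.

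The only real point where care is needed is the reduction step: one must verify that $p$-orthosymmetry and positivity are preserved by the Riesz isomorphism onto $\hat{E_0}$, and that Lemma \ref{Lemma Symmetry} is applicable despite $\hat{E_0}$ being only a dense subspace of $C(K)$ rather than all of $C(K)$. The former is automatic, and the latter is built into the hypotheses of the lemma. Once this localisation is in place, the symmetry is essentially immediate from the fact that multiplication in $C(K)$ is abelian, which is precisely the reason the definition of $p$-orthosymmetry was tailored this way.
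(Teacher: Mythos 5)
Your proposal is correct and follows essentially the same route as the paper, which simply invokes Yosida's theorem to pass to the $C(K)$ setting and then applies Lemma \ref{Lemma Symmetry}; your write-up just makes the localisation to the ideal generated by $x_1,\dots,x_n$ and the transport of positivity and $p$-orthosymmetry explicit. The final step, deducing symmetry from the commutativity of the product $f_1\cdots f_n$, is exactly the intended use of that lemma.
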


The proof of this fact is an easy application of Yosida's Theorem which allows us to translate the problem to
the $C(K)$ setting and using then Lemma \ref{Lemma Symmetry}. 

\section[Representation of polynomials]{Representation of orthogonally additive polynomials.}

We will discuss now the notion of $n$--power of Riesz spaces as presented by Boulabiar and Buskes that
will be instrumental in what follows.

\begin{Def}[\cite{Boulabiar-Buskes 2006}]
Let $E$ be an Archimedean Riesz space and $n\geq 2$. The pair
$(\bod_n E,\od_n)$ it is said to be a $n$-power of $E$ if
\begin{enumerate}
\item $\bod_n E$ is a Riesz space,
\item $\od_n:E\x\dots\x E\lra \bod_n E$ is an orthosymmetric $n$-morphism of Riesz spaces
and,
\item for every $F$ Archimedean Riesz space and every orthosymmetric
$n$-morphism $T:E\x\dots\x E\lra F$, there exists a unique Riesz
homomorphism $ T^{\od_n}:\bod_n E\lra F$ such that $T=T^{\od_n}
\circ \od_n$
\end{enumerate}
\end{Def}

As those authors proved, the $n$-power is unique up to Riesz isomorphism. The results about the $n$-power can be translated in a straighforward way to the setting of $p$-orthosymmetry.

\begin{Lemma}\label{Lemma representacion}
Let $E$ be an Archimedean Riesz space and $F$ a uniformly
complete Archimedean Riesz space. The space $\Li^+_o(^nE,F)$ of
positive $p$-or\-tho\-sy\-mme\-tric $n$-linear applications from
$E$ to $F$ is isomorphic to the space $\Li^+(\bod_nE,F)$ of
positive linear forms from $\bod_nE$ to $F$.
\end{Lemma}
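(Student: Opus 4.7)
The strategy is to exhibit the isomorphism explicitly as the map $\Phi\colon\Li^+(\bod_n E,F)\to\Li^+_o({}^nE,F)$ given by $\Phi(S)=S\circ\od_n$, i.e. $\Phi(S)(x_1,\dots,x_n)=S(x_1\od\cdots\od x_n)$, and to produce its inverse by way of the universal property of the $n$-power. I will first check that $\Phi$ takes values in $\Li^+_o({}^nE,F)$, then construct an inverse, and finally verify that both assignments preserve the order and linear structure, which will yield the asserted isomorphism.

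The first half is a direct verification. $\Phi(S)$ is $n$-linear because $\od_n$ is; it is positive because $\od_n$, being an $n$-morphism, sends positive tuples to positive elements and $S$ is positive; and it is $p$-orthosymmetric because every $p$-disjoint tuple $(x_1,\dots,x_n)$ contains, by definition, a pair of indices $i,j$ lying in different parts of the partition with $|x_i|\w|x_j|=0$, so that the (Boulabiar--Buskes) orthosymmetry of $\od_n$ already forces $x_1\od\cdots\od x_n=0$.

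The core of the argument is the construction of the inverse. Starting from $T\in\Li^+_o({}^nE,F)$, Theorem \ref{n-ortos y posi= simetrico} supplies symmetry of $T$. The intermediate claim is that $T$ is also orthosymmetric in the original Boulabiar--Buskes sense: $T(x_1,\dots,x_n)=0$ as soon as $|x_i|\w|x_j|=0$ for a single pair. Using symmetry and the positive-part decomposition $x_k=x_k^+-x_k^-$, the claim reduces to the case of positive tuples with $x_1\w x_2=0$. I would then follow the strategy of Theorem \ref{Equiv definicion BB}: pass to the ideal $E_0$ generated by $\{x_1,\dots,x_n\}$, which has $|x_1|+\cdots+|x_n|$ as unit, and apply Yosida's representation theorem to identify $E_0$ with a uniformly dense Riesz subspace $\hat E_0$ of some $C(K)$. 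Transporting $T|_{E_0^n}$ to a positive $p$-orthosymmetric $\hat T$ on $\hat E_0$, Lemma \ref{Lemma Symmetry} provides a positive linear map $A$ on the linear hull of products such that $\hat T(f_1,\dots,f_n)=A(f_1\cdots f_n)$. Since $\hat x_1\w\hat x_2=0$ in $C(K)$ means $\hat x_1\hat x_2=0$ pointwise, the product $\hat x_1\cdots\hat x_n$ vanishes and hence $T(x_1,\dots,x_n)=0$, establishing the claim.

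With $T$ now known to be a positive orthosymmetric $n$-linear map in the Boulabiar--Buskes sense, into a uniformly complete Archimedean Riesz space $F$, I would invoke the extension of the universal property of the $n$-power proved in \cite{Boulabiar-Buskes 2006}: such a $T$ factors uniquely as $T=S\circ\od_n$ with $S\colon\bod_n E\to F$ a positive linear map. This $S$ is the preimage of $T$ under $\Phi$, and injectivity of $\Phi$ is automatic because the simple products $x_1\od\cdots\od x_n$ generate $\bod_n E$ as a Riesz space. The assignments are linear and order-preserving in an obvious way, giving the isomorphism. I expect the main obstacle to lie in the previous paragraph: $p$-orthosymmetry is a priori strictly weaker than Boulabiar--Buskes orthosymmetry, and reconciling the two notions for positive symmetric multilinear data requires the combined machinery of symmetry (Theorem \ref{n-ortos y posi= simetrico}), Yosida's representation, and the functional representation of Lemma \ref{Lemma Symmetry}.
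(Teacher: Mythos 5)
Your route is genuinely different from the paper's, and it stalls at its key step. The paper constructs the inverse of $S\lmt S\circ\od_n$ by first lifting $T\in\Li^+_o(^nE,F)$ to the unique positive linear operator $\widetilde T$ on the Fremlin tensor product $\overline{\bigox}_n E$ with $\widetilde T(x_1\ox\dots\ox x_n)=T(x_1,\dots,x_n)$ (this is where uniform completeness of $F$ enters), and then factoring $\widetilde T$ through the canonical projection $\pi:\overline{\bigox}_n E\lra\bod_n E$ after checking that $\ker\pi\subseteq\ker\widetilde T$; this is the $n$-linear version of Theorem 3.1 of \cite{Buskes-Kusraev 2007}, following \cite{Buskes-Van Rooij 2004}. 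Your forward map and the verification that $S\circ\od_n$ is positive and $p$-orthosymmetric are fine, and your intermediate reduction --- that a positive $p$-orthosymmetric $T$ is symmetric by Theorem \ref{n-ortos y posi= simetrico} and then orthosymmetric in the Boulabiar--Buskes sense via Yosida and Lemma \ref{Lemma Symmetry} --- is plausible and consistent with the machinery of Section \ref{Ortosymmetric}.

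The gap is the final invocation. The universal property defining $(\bod_n E,\od_n)$ (Definition 3.1, from \cite{Boulabiar-Buskes 2006}) factors only orthosymmetric Riesz $n$-\emph{morphisms}, and produces Riesz homomorphisms; it says nothing about merely positive orthosymmetric $n$-linear maps. The statement you cite --- that every positive orthosymmetric $n$-linear map into a uniformly complete $F$ factors as $S\circ\od_n$ with $S$ positive linear --- is not available in \cite{Boulabiar-Buskes 2006}: for $n=2$ it is exactly Theorem 3.1 of \cite{Buskes-Kusraev 2007}, and its $n$-linear generalization is, up to your first reduction, the very content of Lemma \ref{Lemma representacion}. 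As written, the argument is therefore circular at its crucial point. To close it you must actually construct the factorization, for instance as the paper does: pass to $\widetilde T$ on $\overline{\bigox}_n E$ by Fremlin's universal property for positive multilinear maps, and show that $\widetilde T$ annihilates $\ker\pi$, which is precisely where the ($p$-)orthosymmetry of $T$ is used.
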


\begin{proof}
In this result, we generalize the ideas of \cite{Buskes-Kusraev
2007} Theorem 3.1 to the $n$-linear case. If $T\in \Li^+_o(^nE,F)$
we will prove that there exists a unique $\Phi_T\in
\Li^+(\bod_nE,F)$ such that $\Phi_T\circ\bod_n=T$ and that the
correspondence $T\lra \Phi_T$ is a Riesz isomorphism.

Given such $T$, let $\widetilde{T}$ be the unique positive
operator $\widetilde{T}: \overline{\bigox}_n E\lra F$ verifying
    $$T(x_1,\dots,x_n)=\widetilde{T}(x_1\ox\dots\ox x_n).$$

If $\pi: \overline{\bigox}_n E\lra \bod_nE$ is the canonical
projection, the operator $\Phi=\Phi_T$ that we need is the one
satisfying $\widetilde{T}=\Phi\circ\pi$. The uniqueness of this
operator is given by the uniqueness of $\widetilde{T}$. Moreover
$\widetilde{T}$ it is positive since $\pi$ is a Riesz homomorphism
and $\widetilde{T}$ is positive. In particular, the correspondence
$T\lmt \Phi$ respects the order.

To conclude the proof, we need to show that $\Phi$ is well
defined. It is sufficient to show that the kernel of $\pi$ is
contained in the kernel of $\widetilde{T}$. We proceed as in
\cite{Buskes-Van Rooij 2004} Theorem 4. Let $f_1\ox\dots\ox f_n
\in \ker \pi$ then $f_1,\dots,f_n$ are $p$-disjoint hence,
    $$\widetilde{T}(f_1\ox\dots\ox f_n)=T(f_1,\dots,f_n)=0$$
since $T$ is $p$-orthosymmetric.

Finally, observe that the correspondence $T\lmt \Phi_T$ is a Riesz
isomorphism since $T$ and $\Phi$ are positive.

\end{proof}

Notice the relevance of the notion of $p$-orthosymmetry in the proof of the previous lemma.
We obtain as a  consequence of the previous results the
representation theorem we were looking for:

\begin{Th}[Representation Theorem for positive orthogonally additive polynomials on Riesz spaces]
Let $E,F$ be Archimedean Riesz spaces with $F$ uniformly complete
and let $(\bod_nE,\od_n)$ be the $n$-power of $E$. The space
$\Po_o^+(^nE,F)$ of positive orthogonally additive $n$-homogeneous
polynomials on $E$ is isomorphic to the space $\Li^+(\bod_n
E,F)$ of positive linear applications from $\bod_n E$ to $F$.
\end{Th}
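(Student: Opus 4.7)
The plan is to chain together the three preceding results and the standard polynomial/symmetric-multilinear-form correspondence. The final isomorphism will be the composition
$$P \longmapsto A \longmapsto \Phi_A,$$
where $A$ is the unique symmetric $n$-linear form with $P = \hat{A}$ and $\Phi_A \in \Li^+(\bod_n E, F)$ is the positive linear form produced by Lemma \ref{Lemma representacion}. Explicitly, the isomorphism will send $P$ to the linear form $\Phi_P$ characterised by $\Phi_P(x \od_n \cdots \od_n x) = P(x)$ for every $x \in E$.

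First I would exploit the one-to-one correspondence between $n$-homogeneous polynomials and symmetric $n$-linear forms to identify $\Po_o^+(^nE,F)$ with the space of positive symmetric $n$-linear forms whose associated polynomial is orthogonally additive; the positivity of $A$ is equivalent to the positivity of $P$ by definition. Then I would invoke Theorem \ref{n-ortos=ortog aditivo}, which says that for a symmetric $n$-linear form $A$, orthogonal additivity of $\hat{A}$ is equivalent to $p$-orthosymmetry of $A$. This identifies $\Po_o^+(^nE,F)$ with the space of positive symmetric $p$-orthosymmetric $n$-linear forms $E \times \cdots \times E \lra F$.

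Next I would observe, using Theorem \ref{n-ortos y posi= simetrico}, that every positive $p$-orthosymmetric $n$-linear form on an Archimedean Riesz space is automatically symmetric. Consequently the symmetry requirement in the previous step is redundant, and the space above coincides with $\Li^+_o(^nE,F)$. Finally I would apply Lemma \ref{Lemma representacion} to conclude that $\Li^+_o(^nE,F) \cong \Li^+(\bod_n E, F)$ as ordered vector spaces (in fact, as Riesz spaces, via the order-preserving correspondence $T \lmt \Phi_T$ with inverse $\Phi \lmt \Phi \circ \od_n$).

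The main obstacle has already been handled in the preceding sections: proving that orthogonal additivity and $p$-orthosymmetry match up (Theorem \ref{n-ortos=ortog aditivo}) and that positivity together with $p$-orthosymmetry forces symmetry (Theorem \ref{n-ortos y posi= simetrico}) were the substantive steps. What remains for this theorem is essentially bookkeeping: check that each bijection in the chain preserves the positive cone, so the composition is an isomorphism of ordered vector spaces, and verify the explicit formula $P(x) = \Phi_P(\od_n(x,\dots,x))$ by unwinding the definitions of $\od_n$ and of the correspondence $T \lmt \Phi_T$ from Lemma \ref{Lemma representacion}.
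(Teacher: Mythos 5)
Your proposal is correct and follows exactly the route the paper intends: the theorem is stated there as a direct consequence of the polynomial/symmetric-form correspondence, Theorem \ref{n-ortos=ortog aditivo}, Theorem \ref{n-ortos y posi= simetrico}, and Lemma \ref{Lemma representacion}, chained in the same order you describe. Nothing essential is missing.
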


If we consider the case of $f$-algebras, the connection with the
theorem of representation of orthogonally additive polynomials in
Banach lattices becomes clearer:

\begin{Th}
Let $E$ be a uniformly complete Archimedean Riesz subspace of a
semiprime $f$-algebra $A$ and $F$ uniformly complete Archimedean.
Then, for every positive orthogonally additive $n$-homogeneous
polynomial $P \in \Po_o^+(^nE,F)$ there exists a unique positive
linear application $L\in \Li^+(E^n,F)$ such that $P(x)=L(x\cdots
x)$ for every $x\in E$.
\end{Th}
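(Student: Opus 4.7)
The plan is to deduce this theorem directly from the preceding Representation Theorem together with an explicit identification of the abstract $n$-power $\bod_n E$ in the $f$-algebra setting. Roughly, when $E$ lives inside a semiprime $f$-algebra, the concrete object $E^n \subseteq A$ (the linear hull of pointwise products) already realises the universal $n$-power, so the abstract statement can be pulled back to a concrete one.

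First, I would apply the Representation Theorem for positive orthogonally additive polynomials on Riesz spaces to the given $P \in \Po_o^+(^nE,F)$: this produces a unique positive linear map $\Phi_P \in \Li^+(\bod_n E, F)$ such that
\[
P(x) = \Phi_P\bigl(\od_n(x,\ldots,x)\bigr) \quad \text{for every } x\in E.
\]
At this stage the only missing piece is to replace $\bod_n E$ by $E^n \subseteq A$.

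Second, I would invoke the concrete realisation of the $n$-power due to Boulabiar and Buskes: the pair $(E^n,\pi)$, where $\pi : E \times \cdots \times E \to E^n$ is the multiplication map $\pi(f_1,\ldots,f_n) = f_1\cdots f_n$ inherited from $A$, satisfies the universal property in the definition of $\bod_n E$. The crucial verification here is that $\pi$ is an orthosymmetric Riesz $n$-morphism, which is exactly the well-known property of a semiprime $f$-algebra that $|f|\wedge |g|=0$ implies $fg=0$; uniform completeness of $E$ guarantees that $E^n$ is itself an Archimedean Riesz subspace of $A$. Uniqueness of the $n$-power (up to Riesz isomorphism) then yields a Riesz isomorphism
\[
J : \bod_n E \lra E^n, \qquad J\bigl(\od_n(f_1,\ldots,f_n)\bigr) = f_1\cdots f_n.
\]

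Third, I would set $L := \Phi_P \circ J^{-1} : E^n \to F$. Because $J$ is a Riesz isomorphism and $\Phi_P$ is a positive linear map, $L$ is a positive linear map, and
\[
L(x\cdots x) = \Phi_P\bigl(J^{-1}(x\cdots x)\bigr) = \Phi_P\bigl(\od_n(x,\ldots,x)\bigr) = P(x).
\]
Uniqueness of $L$ follows from the uniqueness of $\Phi_P$ in the Representation Theorem together with the bijectivity of $J$; equivalently, since the products $f_1\cdots f_n$ linearly span $E^n$ and any $L$ satisfying $P(x)=L(x\cdots x)$ forces, via polarisation on the associated symmetric multilinear form, the value of $L$ on every such product, $L$ is determined on a generating set.

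The main obstacle is really the second step: the identification $\bod_n E \cong E^n$. This is where the two hypotheses (uniform completeness of $E$ and the semiprime $f$-algebra ambient) pay off, since without them one neither gets orthosymmetry of $\pi$ for free, nor the Archimedean Riesz structure on $E^n$ needed for the universal property to apply. Once that identification is in hand, everything else is a formal translation through the isomorphism $J$.
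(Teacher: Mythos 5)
Your proposal is correct and follows essentially the same route as the paper: both identify $E^n$ with the abstract $n$-power $\bod_n E$ via the Boulabiar--Buskes result (their Theorem 3.3 plus the uniqueness statement), obtain the Riesz isomorphism intertwining $x_1\cdots x_n$ with $x_1\od\dots\od x_n$, and define $L$ by composing $\Phi_{\check P}$ with that isomorphism. Your added remarks on why the multiplication map is orthosymmetric and on the uniqueness of $L$ are sound elaborations of steps the paper leaves implicit.
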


\begin{proof}
By Theorem 3.3 \cite{Boulabiar-Buskes 2006}, $E^n$ with its product is
the $n$-power of $E$ and by the uniqueness given by Theorem 3.2 \cite{Boulabiar-Buskes 2006}, there exists a Riesz isomporphism
$i:E^n\lra \bod_n E$ such that $i(x_1\cdots x_n)=x_1\od\dots\od
x_n$. Define $L\in \Li^+(E^n,F)$ as the linear application
$L=\Phi_{\check{P}}\circ i$ (with the notation as in Lemma
\ref{Lemma representacion}) then,
$$P(x)=\check{P}(x,\dots,x)=\Phi_{\check{P}}(x\od\dots\od
x)=L(x\cdots x).$$
\end{proof}

Recently Toumi \cite{Toumi} has obtained an alternative representation theorem for homogeneous orthogonally additive polynomials on Riesz spaces.   Its relation with the results presented here will be discussed elsewhere.


\end{document}